\theoremstyle{plain}
\newtheorem{theorem}{Theorem}[section]
\newtheorem{proposition}[theorem]{Proposition}
\newtheorem{lemma}[theorem]{Lemma}
\theoremstyle{definition}
\newtheorem{example}[theorem]{Example}
\newcommand{\bN}{\mathbb{N}}
\begin{document}

\baselineskip 6.5mm

\title[Sequences of bounds for the spectral radius]{Sequences of bounds for the spectral radius of a positive operator
\footnote{Linear Algebra and its Applications 574 (2019), 40--45} }

\author{Roman Drnov\v sek}

% \date{\today}

\begin{abstract}
In 1992, Szyld provided a sequence of lower bounds for the spectral radius of a nonnegative matrix $A$,
 based on the geometric symmetrization of powers of $A$. 
In 1998, Ta\c{s}\c{c}i and Kirkland proved a companion result by giving a sequence of upper bounds
for the spectral radius of $A$, based on the arithmetic symmetrization of powers of $A$.
In this note, we extend both results to positive operators on $L^2$-spaces.
\end {abstract}

\maketitle

\noindent
 {\it Key words}:   numerical radius,  spectral radius, positive operators, kernel operators \\
 {\it Math. Subj. Classification (2010)}:   47A12,  47A10, 47B34, 47B60 \\

\section{Introduction}
Let $A = (a_{i j})_{i, j =1}^n$ be a nonnegative matrix, i.e., $a_{i, j} \ge 0$ for all $i$ and $j$. 
In the literature, much attention has been paid to provide 
upper and lower bounds for the spectral radius $r(A)$ of $A$. 
Here $r(A)$ is defined as $\max\{|\lambda_1|, \ldots, |\lambda_n|\}$, 
where $\{\lambda_i\}_{i=1}^n$ are the eigenvalues of $A$. This number is an eigenvalue of $A$, and it is
called the Perron root of $A$.
Szyld \cite{Sz92} gave an increasing sequence of lower bounds for $r(A)$ 
that are based on the geometric symmetrization of powers of $A$. 
More precisely, let $S(A)$ be the matrix whose $(i,j)$ entry is equal to $\sqrt{a_{i j} a_{j i}}$,
and define $\rho_k = r(S(A^{2^k}))^{2^{-k}}$. It is shown in  \cite{Sz92} that 
$\rho_0 \le \rho_1 \le \ldots \le \rho_k \le  r(A)$ for all $k$. Easy examples show that the sequence 
$\{\rho_k\}_{k \in \bN}$ does not converge to $r(A)$ in general; if we take e.g. 
$$ A = \left( \begin{matrix}
0 & 1 & 0 \cr
0 & 0 & 1 \cr
1 & 0 & 0 
\end{matrix} \right) , $$
then $\rho_0 = \rho_1 = \ldots = 0$, while $r(A) = 1$.
On the other hand, Ta\c{s}\c{c}i and Kirkland \cite{TK98} provided  a decreasing sequence of upper bounds for $r(A)$
that are based on the arithmetic symmetrization of powers of $A$. Specifically, let $M(A) = (A + A^T)/2$ and define 
$\sigma_k = r(M(A^{2^k}))^{2^{-k}}$. It is proved in \cite{TK98} that 
$\sigma_0 \ge \sigma_1 \ge  \ldots \ge \sigma_k \ge r(A) $ for all $k$ and that 
 the sequence $\{\sigma_k\}_{k \in \bN}$ converges to $r(A)$. 
In this paper we extend both results to positive operators on $L^2$-spaces.
We should mention that  in \cite{Dr03} the inequality $\sigma_0 = r(M(A)) \ge r(A)$ was already extended to this setting.

Throughout the note, let $\mu$ be a $\sigma$-finite positive measure on a set $X$. 
We consider bounded (linear) operators on the complex Hilbert space $L^2(X, \mu)$. 
The norm in $L^2(X, \mu)$ is denoted by $\| \cdot \|_2$. 
An operator $A$ on  $L^2(X, \mu)$ is said to be {\it positive} 
if it maps nonnegative functions to nonnegative ones. Given operators $A$ and $B$ on $L^2(X, \mu)$,
we write $A \ge B$ if the operator $A-B$ is positive.
The operator norm and the spectral radius of an operator are denoted by $\| \cdot \|$ and 
$r(\cdot)$, respectively. The {\it numerical radius} of an operator $A$ on $L^2(X, \mu)$ is defined by 
$$ w(A) := \sup \{ | \langle A f, f \rangle | : f \in L^2(X, \mu), \| f \|_2 = 1 \} . $$
If, in addition, $A$ is positive, then we have 
$$ w(A) = \sup \{ \langle A f, f \rangle  : f \in L^2(X, \mu) , f \ge 0,  \| f \|_2 = 1 \} . $$
Indeed, this follows from the estimate
$$  | \langle A f, f \rangle | \le \int_X \! |A f| \, |f| \, d\mu \le 
    \langle A |f|, |f| \rangle  $$
that holds for any  $f \in L^2(X, \mu)$. It is well-known \cite{GR97} that 
$$ r(A) \le w(A) \le \|A\| $$
for all bounded operators $A$ on $L^2(X, \mu)$. If, in particular, $A$ is selfadjoint, then we have
$r(A)=w(A)=\|A\|$.

Let $A$ be a positive  operator on $L^2(X,\mu)$. The \textit{arithmetic symmetrization} $M(A)$ of $A$ is 
the positive selfadjoint operator on $L^2(X,\mu)$ defined by $M(A) = (A + A^*)/2$. 
Since $\langle M(A) f, f \rangle = \langle A f, f \rangle$ for any nonnegative function $f \in L^2(X, \mu)$, we have
\begin{equation}
 w(M(A)) = w(A) \ . 
\label{wM_A}
\end{equation}

Let $K$ be a positive kernel operator on $L^2(X,\mu)$ with a kernel $k$, that is, $k: X \times X \rightarrow [0,\infty)$ is a measurable function such that $(Kf)(x) = \int_X k(x, y) f(y) d\mu(y)$ for all $f \in L^2(X, \mu)$ and for almost all $x \in X$.
The \textit{geometric symmetrization} $S(K)$ of $K$ is the 
positive selfadjoint kernel operator on $L^2(X,\mu)$ with the kernel equal to $\sqrt{k(x,y)k(y,x)}$
at a point $(x, y) \in X \times X$. Note that $S(K)$ is well-defined on the whole $L^2(X,\mu)$, because 
the kernel of $S(K)$ is smaller than or equal to the kernel of $M(K)$
by the inequality of arithmetic and geometric means. 
It was proved in \cite[Proposition 2.7]{DP06} that 
\begin{equation}
 r(S(K)) \le r(K) . 
\label{rS_K}
\end{equation}
Note that our results do not generalize beyond $L^2$-spaces.

\section{Results}

We begin with an observation that seems to be new also in the finite-dimensional case.

\begin{lemma}
\label{S_square}
If $K$ is a positive kernel operator on $L^2(X, \mu)$, then 
$$ S(K^2) \ge S(K)^2 . $$
\end{lemma}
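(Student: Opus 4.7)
The plan is to prove the operator inequality pointwise at the level of kernels: if the kernel of $S(K^2)$ dominates the kernel of $S(K)^2$ almost everywhere on $X \times X$, then the operator inequality follows at once from the definition of positivity for kernel operators.

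First I would write out both kernels explicitly. Let $k(x,y)$ denote the kernel of $K$, so that the kernel of $K^2$ is
\[
 k_2(x,y) = \int_X k(x,z)\, k(z,y) \, d\mu(z).
\]
Then the kernel of $S(K^2)$ at $(x,y)$ is
\[
 \sqrt{k_2(x,y)\, k_2(y,x)} \;=\; \sqrt{ \Bigl( \int_X k(x,z) k(z,y) \, d\mu(z) \Bigr) \Bigl( \int_X k(y,w) k(w,x) \, d\mu(w) \Bigr) }.
\]
On the other hand, the kernel of $S(K)$ is $s(x,y) = \sqrt{k(x,y) k(y,x)}$, so the kernel of $S(K)^2$ at $(x,y)$ equals
\[
 \int_X s(x,z)\, s(z,y) \, d\mu(z) \;=\; \int_X \sqrt{k(x,z)\, k(z,x)\, k(z,y)\, k(y,z)} \, d\mu(z).
\]

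The key step is then a single application of the Cauchy--Schwarz inequality in $L^2(X,\mu)$ to the factorization
\[
 \sqrt{k(x,z) k(z,x) k(z,y) k(y,z)} \;=\; \sqrt{k(x,z) k(z,y)} \cdot \sqrt{k(z,x) k(y,z)},
\]
which gives
\[
 \int_X \sqrt{k(x,z) k(z,y)} \cdot \sqrt{k(z,x) k(y,z)} \, d\mu(z) \;\le\; \sqrt{ \int_X k(x,z) k(z,y) \, d\mu(z) } \cdot \sqrt{ \int_X k(z,x) k(y,z) \, d\mu(z) }.
\]
After renaming $z \mapsto w$ in the second factor on the right and using Fubini/symmetry to recognize $\int_X k(y,z) k(z,x) \, d\mu(z) = k_2(y,x)$, the right-hand side becomes exactly the kernel of $S(K^2)$ at $(x,y)$, while the left-hand side is the kernel of $S(K)^2$.

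The only real subtlety is justifying that this pointwise (a.e.) inequality between kernels implies the operator inequality $S(K^2) \ge S(K)^2$, but since both operators are kernel operators with nonnegative kernels on $L^2(X,\mu)$, and positivity of the difference of kernel operators follows from nonnegativity of the difference of their kernels, this is immediate. I do not expect any genuine obstacle beyond ensuring the Cauchy--Schwarz step is applied to measurable, nonnegative integrands, which is automatic from the hypothesis that $k$ is a nonnegative measurable function.
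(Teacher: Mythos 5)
Your proposal is correct and follows exactly the paper's own argument: compute the kernels of $S(K^2)$ and $S(K)^2$ explicitly and compare them pointwise via the Cauchy--Schwarz inequality applied to the factorization $\sqrt{k(x,z)k(z,x)k(z,y)k(y,z)} = \sqrt{k(x,z)k(z,y)}\cdot\sqrt{k(z,x)k(y,z)}$. The only difference is that you spell out the Cauchy--Schwarz step and the passage from the a.e.\ kernel inequality to the operator inequality, both of which the paper leaves implicit.
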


\begin{proof}
Let us compare the kernels of both operators. 
If $k$ is the kernel of $K$, then the kernel of $K^2$ at a point $(x,y) \in X \times X$ is equal to
$\int_X k(x,z) k(z,y) d\mu(z)$, and so the kernel of $S(K^2)$ at a point $(x,y)$ equals 
$$ \sqrt{\left( \int_X k(x,z) k(z,y) d\mu(z) \right) \left(\int_X k(y,z) k(z,x) 
d\mu(z) \right)} . $$
On the other hand, the kernel of $S(K)^2$ at a point $(x,y)$ is equal to
$$ \int_X \sqrt{k(x,z) k(z,x)} \sqrt{k(z,y) k(y,z)} d\mu(z) \ . $$
Now the desired inequality is proved by an application of the Cauchy-Schwarz inequality.
\end{proof}

We now extend the finite-dimensional result due to Szyld \cite[Theorem 2.2]{Sz92}.

\begin{theorem}
\label{lower}
Let $K$ be a positive kernel operator on $L^2(X, \mu)$, and let $\rho_n = r(S(K^{2^n}))^{2^{-n}}$, 
$n \in \bN \cup \{0\}$. Then, for each $n$, 
$$ \rho_0 \le \rho_1 \le \ldots \le \rho_n \le  r(K) . $$
\end{theorem}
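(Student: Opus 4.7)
The plan is to prove both inequalities by induction and by reducing them to Lemma~\ref{S_square} together with two standard facts about positive operators on $L^2$: the spectral radius is monotone ($0 \le A \le B$ implies $r(A) \le r(B)$), and the inequality (\ref{rS_K}) relating $r(S(\cdot))$ to $r(\cdot)$.

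For the upper bound $\rho_n \le r(K)$, I would apply (\ref{rS_K}) to the positive kernel operator $K^{2^n}$, which gives $r(S(K^{2^n})) \le r(K^{2^n}) = r(K)^{2^n}$. Taking $2^{-n}$-th roots yields $\rho_n \le r(K)$ immediately, so this half is essentially free.

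For the monotonicity $\rho_n \le \rho_{n+1}$, I would raise both sides to the power $2^{n+1}$, reducing the claim to
$$ r(S(K^{2^n}))^2 \le r(S(K^{2^{n+1}})) . $$
Now $S(K^{2^n})$ is positive and selfadjoint, so $r(S(K^{2^n}))^2 = r(S(K^{2^n})^2)$. Applying Lemma~\ref{S_square} with $K$ replaced by $K^{2^n}$ gives
$$ S(K^{2^{n+1}}) = S\bigl((K^{2^n})^2\bigr) \ge S(K^{2^n})^2 \ge 0 , $$
and then monotonicity of the spectral radius on positive operators finishes the step.

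The main obstacle is invoking monotonicity of the spectral radius in the infinite-dimensional setting: for positive operators $0 \le A \le B$ on $L^2(X,\mu)$ one has $\|A^m\| \le \|B^m\|$ for every $m$ (since $A^m \le B^m$ and the operator norm of a positive operator on $L^2$ is attained, up to $\varepsilon$, on nonnegative functions), and Gelfand's formula then gives $r(A) \le r(B)$. This is standard but should be cited explicitly. Everything else is bookkeeping with the exponents $2^n$.
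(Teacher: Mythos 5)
Your proof is correct and follows essentially the same route as the paper: the upper bound comes from \eqref{rS_K} applied to $K^{2^n}$, and the monotonicity step reduces to $S(K^{2^{n+1}}) \ge S(K^{2^n})^2$ via Lemma~\ref{S_square} plus monotonicity of the spectral radius for positive operators. The only difference is that you spell out the justification of that last monotonicity fact (via $\|A^m\| \le \|B^m\|$ and Gelfand's formula), which the paper leaves as an easy implication.
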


\begin{proof}
By \eqref{rS_K}, we have $r(S(K^{2^n})) \le r(K^{2^n}) =  r(K)^{2^n}$, which implies that 
$\rho_n \le r(K)$ for all $n \in \bN \cup \{0\}$. To finish the proof, it is enough to show that 
$$
 r(S(K^2)) \ge r(S(K))^2 . 
$$ 
By Lemma \ref{S_square}, we have $S(K^2)) \ge S(K)^2$ which implies easily that 
$r(S(K^2)) \ge r(S(K)^2) = r(S(K))^2$ as desired.

% We may assume that $r:=r(S(K)) > 0$. By the Krein-Rutman theorem, 
% there exists a non-zero nonnegative function $f \in L^2(X, \mu)$ such that $S(K) f = r f$. 
\end{proof}

The following theorem is an infinite-dimensional generalization of \cite[Theorems 1 and 2]{TK98}.

\begin{theorem}
\label{upper}
Let $A$ be a positive operator on $L^2(X, \mu)$, and let $\sigma_n = r(M(A^{2^n}))^{2^{-n}}$, $n \in \bN \cup \{0\}$.  Then, for each $n$, 
$$ \sigma_0 \ge \sigma_1 \ge  \ldots \ge \sigma_n \ge r(A) . $$
Furthermore, the sequence $\{\sigma_n\}_{n \in \bN}$ converges to $r(A)$.
\end{theorem}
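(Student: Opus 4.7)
The plan is to reduce the statement to properties of the numerical radius and then invoke Berger's power inequality together with the spectral radius formula.

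First, I would rewrite $\sigma_n$ in a more workable form. The operator $M(A^{2^n})$ is positive and selfadjoint, so $r(M(A^{2^n})) = w(M(A^{2^n}))$. Since $A^{2^n}$ is positive, identity \eqref{wM_A} gives $w(M(A^{2^n})) = w(A^{2^n})$. Hence
$$ \sigma_n = w(A^{2^n})^{2^{-n}} . $$
From here the lower bound $\sigma_n \geq r(A)$ is immediate: applying $r(T) \le w(T)$ to $T = A^{2^n}$ yields $w(A^{2^n}) \ge r(A^{2^n}) = r(A)^{2^n}$, and taking $2^{-n}$-th roots gives the claim.

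Next, I would establish monotonicity $\sigma_{n+1} \le \sigma_n$. After raising to the $2^{n+1}$ power, this is equivalent to $w((A^{2^n})^2) \le w(A^{2^n})^2$. Letting $B = A^{2^n}$, this is the $k=2$ case of Berger's power inequality $w(B^k) \le w(B)^k$, which holds for every bounded operator on a Hilbert space. This is the step I expect to be the only non-elementary ingredient; I would simply cite Berger's inequality from a standard reference such as \cite{GR97}.

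Finally, for convergence I would observe that the sequence $\{\sigma_n\}$ is decreasing and bounded below by $r(A)$, so it converges to some limit $L \ge r(A)$. The estimate $w(T) \le \|T\|$ applied to $T = A^{2^n}$ gives
$$ \sigma_n = w(A^{2^n})^{2^{-n}} \le \|A^{2^n}\|^{2^{-n}} . $$
By Gelfand's spectral radius formula (along the subsequence of powers $2^n$), the right-hand side tends to $r(A)$. Therefore $L \le r(A)$, and combining the two inequalities yields $L = r(A)$. The main obstacle is really just the appeal to Berger's power inequality for the monotonicity; everything else follows cleanly from the already-established identity $w(M(A)) = w(A)$ for positive $A$ and the standard relations $r \le w \le \|\cdot\|$.
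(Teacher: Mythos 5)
Your proposal is correct, and the reduction $\sigma_n = w(A^{2^n})^{2^{-n}}$ via \eqref{wM_A}, the lower bound, and the convergence argument via $w(A^{2^n}) \le \|A^{2^n}\|$ and Gelfand's formula all coincide with the paper's proof. The one place where you genuinely diverge is the monotonicity step. You invoke Berger's power inequality $w(B^k) \le w(B)^k$ (with $B = A^{2^n}$, $k=2$), which is valid for every bounded Hilbert-space operator and is indeed in \cite{GR97}, so the step is sound. The paper instead gives an elementary, self-contained argument that exploits positivity: for nonnegative $f$, expanding $\|Af - A^*f\|_2^2 \ge 0$ and using $\langle (A^*)^2 f, f\rangle = \langle A^2 f, f\rangle$ yields
$$ \langle (A+A^*)^2 f, f \rangle \ge 4 \langle A^2 f, f \rangle , $$
hence $w(M(A)^2) \ge w(A^2) = w(M(A^2))$, which is exactly $\sigma_1 \le \sigma_0$ (applied to $A^{2^n}$ for the general step). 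The trade-off: your route is shorter on the page but rests on a genuinely nontrivial external theorem (Berger's inequality is usually proved via unitary dilation), whereas the paper's route costs two lines of computation and uses nothing beyond the Cauchy--Schwarz-flavoured identity above together with the fact that the numerical radius of a positive operator is attained on nonnegative functions. If you want a self-contained note, the paper's argument is preferable; if you are content to cite the power inequality, yours is a clean alternative and in fact shows that positivity is not needed for the inequality $w(A^{2^{n+1}}) \le w(A^{2^n})^2$ itself (it is needed only to identify $w(A^{2^n})$ with $r(M(A^{2^n}))$).
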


\begin{proof}
By \eqref{wM_A}, we have 
$$ r(M(A^{2^n})) = w(M(A^{2^n})) = w(A^{2^n}) \ge r(A^{2^n})=r(A)^{2^n} , $$
and so $\sigma_n \ge r(A)$ for all $n$. 

To show that  the sequence $\{\sigma_n\}_{n \in \bN}$ is decreasing, it is enough to see that 
$r(M(A^2)) \le r(M(A))^2 = r(M(A)^2)$, or equivalently $w(M(A^2)) \le w(M(A)^2)$.
It follows from the inequality $\|A f - A^*f\|^2_2  \ge 0$ that we have 
$$  \langle A^* A f, f \rangle + \langle A A^* f, f \rangle \ge 2 \langle A^2 f, f \rangle $$
for all nonnegative functions $f \in L^2(X, \mu)$. This implies that 
$$  \langle (A+ A^*)^2 f, f \rangle \ge 4 \langle A^2 f, f \rangle, $$
and so we obtain the desired inequality $w(M(A)^2) \ge w(A^2) = w(M(A^2))$, where we have also used \eqref{wM_A}.

For each $n \in \bN \cup \{0\}$ we have 
$$ r(M(A^{2^n}))=  w(M(A^{2^n})) =  w(A^{2^n}) \le  \|A^{2^n}\| , $$ 
and so
$$ r(A) \le \sigma_n \le  \|A^{2^n}\|^{2^{-n}} . $$ 
Since $\|A^{2^n}\|^{2^{-n}} \rightarrow r(A)$ as $n \rightarrow \infty$, 
the sequence $\{\sigma_n\}_{n \in \bN}$ converges to $r(A)$.
This completes the proof.
\end{proof}

Examples  in \cite{TK98} explain Theorem \ref{upper} in the finite-dimensional case. The following example 
further illustrates it in the infinite-dimensional setting.

\begin{example}
Let $A$ be a weighted unilateral shift on $l^2$ with weights $\{1, 4, 1, 4, 1, 4, \ldots\}$, that is, the operator 
defined by $A(x_1, x_2, \ldots) = (0, x_1, 4 x_2, x_3, 4 x_4, \ldots)$. Then 
$M(A) x =\frac{1}{2} (x_2, x_1+4 x_3, 4 x_2+x_4, x_3+ 4 x_5, 4 x_4+ x_6, \ldots)$.
It is not difficult to verify that $r(M(A))= \|M(A)\| = 5/2$, and so $\sigma_0 = 5/2$.
Furthermore, $A^2 x = (0, 0, 4 x_1, 4 x_2, 4 x_3, \ldots) = 4 S^2 x$, where $S$ is the unilateral shift on $l^2$.
Therefore, $r(A)^2 = r(A^2) = 4 r(S)^2 = 4$, and so $r(A) = 2$. Since $M(A^2) = 2\, (S^2+(S^*)^2)$, we have 
$r(M(A^2))= \|M(A^2)\| = 4$, so that $\sigma_1 = 2$. 
Similarly, we obtain that $\sigma_n = 2$ for all $n=2, 3, 4, \ldots$.  

More generally, let $p$ be a positive integer, and let $A_p$ be a weighted unilateral shift on $l^2$ with weights 
$\{\underbrace{1, 1, \ldots,1}_{2^p-1}, 2^{2^p}, \underbrace{1, 1, \ldots,1}_{2^p-1}, 2^{2^p}, \ldots\}$, so that 
$A_1 = A$. Then $A_p^{2^p} = 2^{2^p} S^{2^p}$, and
$r(A_p)^{2^p} = r(A_p^{2^p}) = 2^{2^p} r(S)^{2^p} = 2^{2^p} $, so that $r(A_p) = 2$.
Since $M(A_p^{2^p}) = 2^{2^p-1}\, (S^{2^p}+(S^*)^{2^p})$, we have 
$r(M(A_p^{2^p}))= \|M(A_p^{2^p})\| = 2^{2^p}$, so that $\sigma_p= 2$. 
Then $\sigma_n  = 2$ for all $n \ge p$ by Theorem \ref{upper}.  
One can also show that $\sigma_0 = r(M(A_p))= \|M(A_p)\| \ge 2^{2^p-1}$ and that 
$\sigma_n  > 2$ for all $n < p$. 
\end{example}

We complete this note with an application of the inequality  \eqref{rS_K}. 
Several authors have studied the spectrum and the spectral radius of selfadjoint kernel operators. 
If $K$ is a positive kernel operator on $L^2(X, \mu)$,
then $S(K)$ is a selfadjoint kernel operator. Therefore, if we can compute $r(S(K))$, then  the inequality  \eqref{rS_K}
provides the lower bound for $r(K)$. Let us illustrate this with the following proposition.

\begin{proposition}
Let $g : [0,1] \times [0,1] \rightarrow [0,\infty)$ be a measurable function such that, for some $M \ge 1$, 
$\frac{1}{M} \le g(x, y) \le M$ and $g(x,y) g(y,x) = 1$ for all $x$ and $y$. Then 
the function $k(x,y) = \min\{x,y\} \cdot g(x, y)$ is the kernel of the positive kernel operator $K$ on $L^2([0,1])$, and
we have  
$$ r(K) \ge \frac{4}{\pi^2} \ . $$ 
\end{proposition}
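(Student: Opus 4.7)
The plan is to apply inequality \eqref{rS_K} and reduce the problem to computing the spectral radius of the classical integral operator with kernel $\min\{x,y\}$.

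First, I would note that since $k(x,y) \le M \min\{x,y\} \le M$ on $[0,1]^2$, the kernel is bounded, so $K$ is a (Hilbert–Schmidt, hence) bounded positive operator on $L^2([0,1])$. Then I compute the kernel of the geometric symmetrization $S(K)$: using the hypothesis $g(x,y)g(y,x)=1$,
$$ \sqrt{k(x,y)\,k(y,x)} \;=\; \sqrt{\min\{x,y\}^2 \cdot g(x,y)g(y,x)} \;=\; \min\{x,y\}. $$
So $S(K)$ is the familiar Volterra-type integral operator $T$ on $L^2([0,1])$ with kernel $\min\{x,y\}$, independent of $g$.

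Next, I identify $r(T)$ via the associated Sturm–Liouville problem. Splitting the integral,
$$ (Tf)(x) = \int_0^x y\,f(y)\,d\mu(y) + x\int_x^1 f(y)\,d\mu(y), $$
and differentiating gives $(Tf)'(x) = \int_x^1 f(y)\,d\mu(y)$ and $(Tf)''(x) = -f(x)$, with boundary conditions $(Tf)(0)=0$ and $(Tf)'(1)=0$. Therefore, if $Tf = \lambda f$ with $\lambda \ne 0$, then $f$ satisfies $-\lambda f'' = f$ on $[0,1]$ with $f(0)=0$, $f'(1)=0$. The eigenvalues of $-u'' = \mu u$ under these boundary conditions are $\mu_n = \bigl((2n-1)\pi/2\bigr)^2$, $n \in \bN$, and accordingly the eigenvalues of $T$ are
$$ \lambda_n = \frac{4}{(2n-1)^2 \pi^2}, \qquad n = 1, 2, 3, \ldots, $$
the largest being $\lambda_1 = 4/\pi^2$. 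Since $T$ is compact and selfadjoint, $r(T) = \lambda_1 = 4/\pi^2$.

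Finally, I invoke \eqref{rS_K} to conclude $r(K) \ge r(S(K)) = r(T) = 4/\pi^2$. The only step requiring a bit of care is justifying the differentiation under the integral and the use of the standard eigenvalue list for the Sturm–Liouville problem; everything else is immediate from the hypothesis $g(x,y)g(y,x)=1$ and the inequality already proved in \eqref{rS_K}.
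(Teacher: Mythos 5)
Your proof is correct and follows the same route as the paper: compute the kernel of $S(K)$, which collapses to $\min\{x,y\}$ thanks to $g(x,y)g(y,x)=1$, identify $r(S(K))=4/\pi^2$, and apply \eqref{rS_K}. The only difference is that the paper cites a textbook exercise for $r(S(K))=4/\pi^2$, whereas you derive it from the Sturm--Liouville problem $-\lambda f''=f$, $f(0)=f'(1)=0$; your derivation is sound (one could add the one-line check that $\sin(\pi x/2)$ really is an eigenfunction for $\lambda=4/\pi^2$, so that the largest candidate eigenvalue is actually attained), and it makes the argument self-contained.
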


\begin{proof}
Since $k(x,y)$ is a bounded nonnegative function, it defines  the positive kernel operator $K$ on $L^2([0,1])$.
The kernel of $S(K)$ at a point $(x, y) \in [0,1] \times [0,1]$ is equal to $\min\{x,y\}$. 
It follows from \cite[Exercise 6.5.3, p.103 and p.271]{EMT} that $r(S(K)) =  4/\pi^2$. 
Now, the proof is finished with an application of the inequality  \eqref{rS_K}. 
\end{proof}

\vspace{3mm}
{\it Acknowledgment.} The author acknowledges the financial support from the Slovenian Research Agency  (research core funding No. P1-0222).

\vspace{2mm}

\baselineskip 6mm
\noindent
Roman Drnov\v sek \\
Department of Mathematics \\
Faculty of Mathematics and Physics \\
University of Ljubljana \\
Jadranska 19 \\
SI-1000 Ljubljana, Slovenia \\
e-mail : roman.drnovsek@fmf.uni-lj.si 

\end{document}